\documentclass[graybox]{svmult}


\usepackage{mathptmx}       
\usepackage{helvet}         
\usepackage{courier}        
\usepackage{type1cm}        
%
\usepackage{makeidx}         
\usepackage{graphicx}        
\usepackage{multicol}        
\usepackage[bottom]{footmisc}

\def\A{{\mathbf A}}

\def\B{{\mathbf B}}

\def\N{{\mathbf N}}

\def\Q{{\mathbf Q}}

\def\R{\mathbf R}

\def\Z{{\mathbf Z}}
\def\Z{\mathbf Z}

\def\limproj{\mathop{\oalign{lim\cr\hidewidth$\longleftarrow$\hidewidth\cr}}}


\makeindex             


\begin{document}

\title*{Two points of the boundary of toric geometry}
\author{Bernard Teissier}

\institute{ \at Institut math\'ematique de Jussieu-Paris Rive Gauche, \email{bernard.teissier@imj-prg.fr}
}
%
%
\maketitle
\hfill\textit{To Antonio Campillo, on the occasion of his 65th Birthday}\par\medskip
\abstract{ This note presents two observations which have in common that they lie at the boundary of toric geometry. The first one because it concerns the deformation of affine toric varieties into non toric germs in order to understand how to avoid some ramification problems arising in the study of local uniformization in positive characteristic, and the second one because it uses limits of projective systems of equivariant birational maps of toric varieties to study the space of additive preorders on $\Z^r$ for $r\geq 2$.}

\section{Using toric degeneration to avoid wild ramification}
\label{sec:1}
In his book \cite{C}, Campillo introduces and studies a notion of characteristic exponents for plane branches over an algebraically closed field of positive characteristic. One of the definitions he gives is that the characteristic exponents are those of a plane branch in characteristic zero having the same process of embedded resolution of singularities. His basic definition is given in terms of Hamburger-Noether expansion\footnote{The Hamburger-Noether expansion is an algorithm extracting in any characteristic a description of the resolution process by point blowing-ups from a parametric representation $x(t),y(t)$.} but we shall not go into this here. He then gives an example to show that even if you do have a Puiseux-type parametrization for a branch in positive characteristic, which is not always the case, the exponents you see in the parametrization are \textit{not} in general the characteristic exponents. His example (see \cite[Chap. III, \S5, Example 3.5.4]{C}) is this:\par\noindent Let $p$ be a prime number. Let us choose a field $k$ of characteristic $p$ and consider the plane branch defined parametrically by $x=t^{p^3},\ y=t^{p^3+p^2}+t^{p^3+p^2+p+1}$, and implicitly by a unitary polynomial of degree $p^3$ in $y$ with coefficients in $k[[x]]$. Campillo computes by a Hamburger-Noether expansion the characteristic exponents and finds $\beta_0=p^3,\beta_1=p^3+p^2,\beta_2=p^3+2p^2+p,\beta_3=p^3+2p^2+2p+1$. Note that there are four characteristic exponents while the parametrization exhibits three exponents in all. In \cite[Remark 7.19]{Te1}, the author had computed directly from the parametrization the generators of the semigroup of values of the $t$-adic valuation of $k[[t]]$ on the subring $k[[t^{p^3},t^{p^3+p^2}+t^{p^3+p^2+p+1}]]$ and found the numerical semigroup with minimal system of generators:$$\Gamma= \langle p^3,\  p^3+p^2,\ p^4+p^3+p^2+p, \ p^5+p^4+p^3+p^2+p+1\rangle.$$ We can verify that Campillo's characteristic exponents and the generators of the semigroup satisfy the classical relations of Zariski, in accordance with \cite[Proposition 4.3.5]{C}.\par
This example is very interesting because it shows that in positive characteristic, even if a Puiseux-type expansion exists, its exponents do not determine the resolution process. In thinking about resolution in positive characteristic, one should keep away from ideas inspired by Puiseux exponents. The semigroup, however, does determine the resolution process in all characteristics. It is shown in \cite{Te1} and \cite{Te3} that in the case of analytically irreducible curves one can obtain embedded resolution by studying the embedded resolution of the monomial curve corresponding to the minimal system of generators of the numerical semigroup $\Gamma$ of the values taken on the algebra of the curve by its unique valuation. Another, more classical, reason is that the semigroup determines the Puiseux exponents of the curve in characteristic zero having the same resolution process, and therefore this resolution process (see \cite[Chap. IV, \S 3]{C}).\par 
The polynomial $f(x,y)\in k[[x]][y]$ defining our plane branch can be obtained by eliminating $u_2,u_3$ between three equations which are: $$y^p-x^{p+1}-u_2=0,\  u_2^p-x^{p(p+1)}y-u_3=0,\  u_3^p-x^{p^2(p+1)}u_2=0.$$This makes it apparent that our plane branch is a flat deformation of the curve defined by $y^p-x^{p+1},\  u_2^p-x^{p(p+1)}y=0,\  u_3^p-x^{p^2(p+1)}u_2=0$, which is the monomial curve $C^\Gamma$ in $\A^4(k)$ given parametrically by $x=t^{p^3},y=t^{p^3+p^2},u_2=t^{p^4+p^3+p^2+p},u_3=t^{p^5+p^4+p^3+p^2+p+1}$. That is, the monomial curve whose affine algebra is the semigroup algebra $k[t^\Gamma]$. The binomial equations correspond to a system of generators of the arithmetical relations between the generators of the semigroup. Compare with \cite[Remark 7.19]{Te3}; we have chosen here the canonical system of relations between the generators of the semigroup, where all the exponents in the second monomial of the equations except the exponent of $x$ are $<p$.\par\noindent 
Moreover, if we give to each of the variables $x,y,u_2,u_3$ a weight equal to the exponent of $t$ for this variable in the parametrization of $C^\Gamma$, we see that to each binomial is added a term of higher weight. This is an \textit{overweight deformation} of a prime binomial ideal in the sense of \cite[\S 3]{Te3}. \par
Eliminating $u_2$ and $u_3$ between the three equations gives the equation of our plane curve with semigroup $\Gamma$:
$$(y^p-x^{p+1})^{p^2}-2x^{p^2(p+1)}y^p+x^{(p^2+1)(p+1)}=0.$$
If $p=2$ this reduces to $(y^2-x^3)^4-x^{15}=0$, which looks like it should be the overweight deformation $y^2-x^3-u_2=0, u_2^4-x^{15}=0$ of the monomial curve with equations $y^2-x^3=0, u_2^4-x^{15}=0$. However, such is not the case because the two binomials $y^2-x^3,u_2^4-x^{15}$ do not generate a prime ideal in $k[[x,y,u_2]]$. One verifies that $u_2^2-x^6y$ is not in the ideal but its square is. The binomials do not define an integral monomial curve, but in fact the non reduced curve given parametrically by $x=t^8,y=t^{12},u_2=t^{30}$ with non coprime exponents, while we know that the semigroup of our curve is $\Gamma=\langle 8,12,30,63\rangle$. The equation of our plane curve is indeed irreducible, but it is \textit{not} an overweight deformation of a integral monomial curve in $\A^3(k)$. One has to embed our plane curve in $\A^4(k)$ to view it as an overweight deformation of a monomial curve.\par\noindent The reader can verify that the same phenomenon occurs in any positive characteristic. For $p\neq 2$ our curve looks like an overweight deformation of the curve in $\A^3(k)$ defined by the binomials $y^p-x^{p+1}, u_2^{p^2}-2x^{p^2(p+1)}y^p$ but these binomials do not generate a prime ideal: the binomial $u_2^p-2x^{p(p+1)}y$ is not in the ideal, but its $p$-th power is by Fermat's little theorem. So appearances can be deceptive also from the equational viewpoint.\par
We expect the same result in characteristic zero, and this is true as soon as the field $k$ contains the $p$-th roots of 2. In this case, by (the proof of) \cite[Theorem 2.1]{E-S} the ideal $I$ generated by $y^p-x^{p+1}, u_2^{p^2}-2x^{p^2(p+1)}y^p$ is not prime because the lattice ${\mathcal L}$ in $\Z^3$ generated by $(-(p+1),p,0)$ and $(-p^2(p+1),-p,p^2)$ is not saturated; the vector $w=(-p(p+1),-1,p)$ is not in ${\mathcal L}$, but $pw$ is. Here the argument is that if $I$ was a prime ideal, at least one of the factors of the product $\prod_{\zeta^p=2}(u_2^p-\zeta x^{p(p+1)}y)=u_2^{p^2}-2x^{p^2(p+1)}y^p$ should be in $I$, which is clearly impossible.\par
This raises the following question: Given an algebraically closed field $k$, assuming that we have a Puiseux expansion $x=t^n,y=\sum_ja_jt^j,\ a_j\in k$, can one predict, from the set $\{j/a_j\neq 0\}$, useful information about the semigroup of the corresponding plane curve, even only a bound on the number of generators? More generally, denoting by $k[[t^{\Q_{\geq 0}}]]$ the Hahn ring of series whose exponents are non negative rational numbers forming a well-ordered set, and given a series $y(t)\in k[[t^{\Q_{\geq 0}}]]$ which is integral over $k[[t]]$, can one deduce from the knowledge about the exponents and coefficients of $y(t)$ provided by the work of Kedlaya in \cite{Ke} useful information about the semigroup of the plane branch whose equation is the integral dependence relation?
\par\noindent
This example has another interesting feature. We note that no linear projection of our plane curve to a line in the plane can be tamely ramified. However, our curve is a deformation of the monomial curve, for which the projection to the $u_3$-axis is tamely ramified. More precisely, the inclusion of $k$-algebras $k[u_3]\subset k[t^\Gamma]$ corresponding to $u_3\mapsto t^{p^5+p^4+p^3+p^2+p+1}$ gives rise to an extension $\Z\subset \Z$ of value groups, for the $u_3$-adic and $t$-adic valuations respectively, whose index is $p^5+p^4+p^3+p^2+p+1$ and hence prime to $p$, while there is no residue field extension. This is related to what we saw in the first part of this section; it is precisely the coordinate $u_3$ missing in $\A^3(k)$ which provides the tame projection.\par\noindent
It is a general fact that if $\Gamma\subset \Z^r$ is a finitely generated semigroup generating $\Z^r$ as a group, given a system of generators $\Gamma=\langle \gamma_1,\ldots ,\gamma_N\rangle$ and an algebraically closed field $k$, there always exist $r$ of the generators, say $\gamma_{i_1},\ldots ,\gamma_{i_r}$ such that the inclusion of $k$-algebras $k[u_{i_1},\ldots ,u_{i_r}]\subset k[t^\Gamma]$ defined by $u_{i_\ell}\mapsto t^{\gamma_{i_\ell}}$ defines a tame extension of the fraction fields. In fact, the corresponding map $\pi\colon{\rm Spec}k[t^\Gamma]\rightarrow\A^r(k)$ is \'etale on the torus of ${\rm Spec}k[t^\Gamma]$. Note that the subset $\{i_1,\ldots ,i_r\}\subset \{1,\ldots ,N\}$ depends in general on the characteristic of the field $k$. This is immediately visible in the case of a monomial curve, where the result follows directly from the fact that the generators of the semigroup are coprime since the semigroup generates $\Z$ as a group.\par\noindent In the general case, as explained in \cite[Proof of Proposition 3.20 and Proof of Proposition 7.4]{Te3}, modulo a Gale-type duality this fact is an avatar of the fact that the \textit{relative torus} ${\rm Spec}\Z[t^{\Z^r}]$ of 
${\rm Spec}\Z[t^\Gamma]$ is smooth over ${\rm Spec}\Z$ while of course the whole toric variety is not. This implies, for each prime number $p$, the non-vanishing on the torus of certain jacobian determinants which are exactly those whose non vanishing is needed to ensure the \'etaleness of the map $\pi$. We note that the map $\pi$ is not finite in general; it is finite if and only if the vectors $\gamma_{i_1},\ldots ,\gamma_{i_r}$ generate the cone of $\R^r$ generated by $\Gamma$.\par\noindent More precisely, recall the description found in \cite{Te1}, before Prop. 6.2, of the jacobian ideal of an $r$-dimensional affine toric variety defined by a prime binomial ideal $ P\subset k[U_1,\ldots
,U_N]$. The jacobian determinant
$J_{G,\mathbf L'}$ of rank
$c=N-r$ of the generators $(U^{m^\ell}-\lambda_\ell U^{n^\ell})_{\ell\in\{1,\ldots , L\}}$ of $P$, associated to a sequence $G=(k_1,\ldots , k_c)$ of distinct elements of $\{1,\ldots, N\}$ and a subset $\mathbf L'\subseteq \{1,\ldots , L\}$ of
cardinality $c$, satisfies the congruence
$$U_{k_1}\ldots U_{k_c}.J_{G,\mathbf L'}\equiv
\big(\prod_{\ell\in \mathbf L'} U^{m^\ell}\big)\hbox{\rm Det}_{G,\mathbf L'}(\langle m-n\rangle ) \ \
\hbox{\rm mod.} P,$$ where $\big(\langle m-n\rangle\big)$ is the matrix of the
vectors $(m^\ell-n^\ell)_{\ell\in \{1,\ldots , L\}}$,  and $\hbox{\rm Det}_{G,\mathbf L'}$ indicates the minor
in question. If the field $k$ is of characteristic $p$, choosing a minor which is not divisible by $p$ amounts to choosing $r$ of the coordinates such that the corresponding projection to $\A^r(k)$ of a certain binomial variety containing the toric variety as one of its irreducible components (see \cite[Corollary 2.3]{E-S}) is \'etale outside of the coordinate hyperplanes.\par
It is shown in \cite[Proof of Proposition 3.20]{Te3} that for any prime $p$ there exist minors ${\rm Det}_{G,\mathbf L'}(\langle m-n\rangle )$ which are not divisible by $p$. As mentioned above, this is the equational aspect of the smoothness over ${\rm Spec}\Z$ of the torus ${\rm Spec}\Z[t^{\Z^r}]$ of the affine toric variety over $\Z$ corresponding to the subsemigroup $\Gamma$ of $\Z^r$.\par
The next step is to realize that an overweight deformation preserves the non vanishing. Let us illustrate this on our example:\par\noindent
The jacobian matrix of our three binomial equations over a field of characteristic $p$ is $$\pmatrix{-x^p &0&0&0\cr
0&-x^{p(p+1)}&0&0\cr 0&0&-x^{p^2(p+1)}&0\cr}$$
We see that there is only one minor which is non zero on the torus, where $x\neq 0$, corresponding to the inclusion $k[u_3]\subset k[t^\Gamma]$. After the overweight deformation which produces our plane branch, the jacobian matrix becomes
$$\pmatrix{-x^p &0&-1&0\cr
0&-x^{p(p+1)}&0&-1\cr 0&0&-x^{p^2(p+1)}&0\cr}$$
and we see that the same minor is $\neq 0$. The facts we have just mentioned imply that for each characteristic $p$, some of these minors are non zero modulo $p$. But the duality implies that if $u_{i_1},\ldots ,u_{i_r}$ are the variables not involved in the derivations producing one such minor, then the absolute value of this minor, which is not divisible by $p$, is the index of the extension of groups $\Z\gamma_{i_1}\oplus\cdots \oplus\Z\gamma_{i_r}\subset \Z^r$. This explains the tameness result we have just seen, and also why it is preserved by overweight deformation. In our example the matrix $\big(\langle m-n\rangle\big)$ after reduction modulo $p$ is
$$\pmatrix{-1 &0&0&0\cr
0&-1&0&0\cr 0&0&-1&0\cr}$$
In conclusion the method which we can apply to our curve, which is that if we embed it in the affine space spanned by the monomial curve associated to its semigroup, in any characteristic we are certain to find a tame projection to a coordinate axis, will work for any finitely generated semigroup $\Gamma$ in $\Z^r$ generating $\Z^r$ as a group and provide tame projections to $\A^r(k)$ of the affine toric variety ${\rm Spec}k[t^\Gamma]$, which will remain tame after an overweight deformation. In order to obtain tame projections for the space whose valuation we want to uniformize, we have to first re-embed it in the space where the associated toric variety lives so that it can appear as an overweight deformation. This is an important element in the proof of local uniformization for rational Abhyankar valuations (those with value group $\Z^{{\rm dim}R}$) on equicharacteristic excellent local domains $R$ with an algebraically closed residue field given in \cite{Te3}.\par\noindent The reason is that the local uniformization of Abhyankar valuations on an equicharacteristic  excellent local domain with algebraically closed residue field can be reduced to that of rational valuations of complete local domains whose semigroup is a finitely generated subsemigroup of $\Z^r$ and then the complete local domain is an overweight deformation of an affine toric variety by \cite[Proposition 5.1]{Te3}.

\section{Additive preorders and orders on $\Z^r$ and projective limits of toric varieties}
\label{sec:2}
In this section the space of additive preorders on $\Z^r$ with a topology defined by Kuroda and Sikora (see \cite{Ku}, \cite{S}) is presented as homeomorphic to the projective limit of finite topological spaces which are spaces of orbits on toric varieties with the topology induced by the Zariski topology, and the space of additive orders as the closed subspace corresponding to the projective limit of the subsets of closed points of the preceding finite topological spaces. We use this to give a toric proof of a theorem of Sikora in \cite{S} showing that the space of additive orders on $\Z^r$ with $r\geq 2$ is homeomorphic to the Cantor set.  The relation between toric geometry and preorders on $\Z^r$ is due to Ewald-Ishida in \cite{E-I} where they build the analogue in toric geometry of the Zariski-Riemann space of an algebraic variety. This relation was later developed by Pedro Gonz\'alez P\'erez and the author in \cite{GP-T}, which contains in particular the results used here. In that text we quoted Sikora's result without realizing that we could give a direct proof in our framework. We begin with the:
\begin{proposition}\label{or} Let $I$ be a directed partially ordered set and $(b_{\iota',\iota}\colon X_{\iota'}\to X_\iota$ for $\iota'>\iota)$ a projective system indexed by $I$ of surjective continuous maps between finite topological spaces. Then:\par\noindent
a) The projective limit $\mathcal X$ of the projective system, endowed with the projective limit topology, is a quasi-compact space.\par\noindent
b) In each $X_\iota$ consider the subset $D_\iota$ of closed points, on which the induced topology is discrete. If we assume that:\begin{enumerate}
 \item The maps $b_{\iota',\iota}$ map each $D_{\iota'}$ onto $D_\iota$ in such a way that the inverse image in the projective limit $\mathcal D$ of the $D_\iota$ by the canonical map  $b_{\infty,\iota}\colon{\mathcal D}\to D_\iota$ of any element of a $D_\iota$ is infinite.
 \item There exists a map $h\colon I\to\N\setminus\{0\}$ such that $h(\iota')\geq h(\iota)$ if $\iota'>\iota$ and $h^{-1}([1,m])$ is finite for all $m\in \N\setminus\{0\}$, where $[1,m]=\{1,\ldots,m\}$.
 \end{enumerate} Then $\mathcal D$ is closed in $\mathcal X$ and homeomorphic to the Cantor set.
\end{proposition}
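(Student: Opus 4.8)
\emph{Part (a).} The plan is to prove quasi-compactness directly by an ultrafilter argument, since one cannot simply say that $\mathcal X$ is a closed subspace of $\prod_\iota X_\iota$: the $X_\iota$ need not be Hausdorff, so the equalizer conditions cutting out $\mathcal X$ from the product need not be closed. Given an ultrafilter $\mathcal F$ on $\mathcal X$, its pushforward $\mathcal F_\iota=(b_{\infty,\iota})_*\mathcal F$ on the \emph{finite} set $X_\iota$ is necessarily principal, say at a point $p_\iota\in X_\iota$. Because the pushforward of a principal ultrafilter is principal at the image of its point and $(b_{\infty,\iota})_*\mathcal F=(b_{\iota',\iota})_*(b_{\infty,\iota'})_*\mathcal F$, one gets $b_{\iota',\iota}(p_{\iota'})=p_\iota$ for $\iota'>\iota$, so that $(p_\iota)_\iota\in\mathcal X$. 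Moreover $\mathcal F$ converges to $(p_\iota)_\iota$: any basic neighbourhood $b_{\infty,\iota}^{-1}(V)$ of it has $p_\iota\in V$, hence $V\in\mathcal F_\iota$ and $b_{\infty,\iota}^{-1}(V)\in\mathcal F$. Thus every ultrafilter on $\mathcal X$ converges, and $\mathcal X$ is quasi-compact. (Alternatively, in the $T_0$ case — the one relevant here — one invokes that finite $T_0$ spaces are spectral and that topological limits of spectral spaces along spectral maps are spectral.)

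\emph{Part (b).} The strategy is to check the classical characterisation of the Cantor set (Brouwer): a non-empty, compact, metrizable, totally disconnected space with no isolated point is homeomorphic to it. First, in any finite topological space the set of closed points is closed — its closure is the union of the one-point closures of its elements — and carries the discrete topology; hence each $D_\iota$ is closed in $X_\iota$, so $\mathcal D=\bigcap_\iota b_{\infty,\iota}^{-1}(D_\iota)$ is closed in $\mathcal X$. By hypothesis~(1) the transition maps restrict to surjections $D_{\iota'}\to D_\iota$, and since $D_\iota$ already carries the subspace topology from $X_\iota$ one checks that the subspace topology on $\mathcal D\subseteq\mathcal X$ agrees with the projective-limit topology of the system $(D_{\iota'}\to D_\iota)$ of finite \emph{discrete} spaces; in particular $\mathcal D$ is a closed subspace of the product $\prod_\iota D_\iota$.

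The properties required by Brouwer's characterisation are then verified as follows. Compactness: $\mathcal D$ is closed in the quasi-compact $\mathcal X$ by (a). Hausdorff: $\mathcal D$ is a subspace of a product of discrete (hence Hausdorff) spaces; so $\mathcal D$ is compact Hausdorff. Total disconnectedness: the sets $b_{\infty,\iota}^{-1}(\{d\})$, for $\iota\in I$ and $d\in D_\iota$, form a basis of clopen sets (since $\{d\}$ is clopen in the discrete $D_\iota$), so $\mathcal D$ is zero-dimensional. Metrizability: here hypothesis~(2) enters — since $h^{-1}([1,m])$ is finite for every $m$ and $I=\bigcup_m h^{-1}([1,m])$, the index set $I$ is countable, so $\prod_\iota D_\iota$ is a countable product of metrizable finite discrete spaces, hence metrizable, and so is its subspace $\mathcal D$. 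Absence of isolated points: here hypothesis~(1) enters — every neighbourhood of a point $d=(d_\iota)\in\mathcal D$ contains a basic open $b_{\infty,\iota}^{-1}(\{d_\iota\})$, which is infinite by assumption and so meets $\mathcal D\setminus\{d\}$; this also shows $\mathcal D\neq\emptyset$. Brouwer's characterisation then gives that $\mathcal D$ is homeomorphic to the Cantor set.

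\emph{Main obstacle.} The point requiring care, rather than a genuine difficulty, is twofold. In part (a) it is the non-Hausdorffness of the $X_\iota$, which rules out the one-line ``closed subspace of the product'' argument and forces the ultrafilter (or spectral-space) route. In part (b) it is keeping the several topologies apart and checking they coincide on $\mathcal D$: the projective-limit topology on $\mathcal X$, the subspace topology it induces on $\mathcal D$, the projective-limit topology on the limit of the discrete $D_\iota$, and the product topology on $\prod_\iota D_\iota$. Everything else is the routine matching of the two hypotheses to the two non-automatic items on Brouwer's list — countability of $I$ (from the function $h$) yields metrizability, infiniteness of the fibres of $b_{\infty,\iota}$ over the closed points yields the absence of isolated points — while compactness, separation and zero-dimensionality come for free from the finiteness and discreteness of the $D_\iota$.
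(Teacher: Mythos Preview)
Your proof is correct and follows the same overall strategy as the paper's: both reduce part~(b) to Brouwer's characterisation of the Cantor set as a non-empty compact metrizable totally disconnected perfect space. The differences lie in the execution. For part~(a), the paper simply invokes Tychonoff's theorem with a reference to \cite{H-Y} and \cite{S}, whereas you correctly flag that the non-Hausdorffness of the $X_\iota$ blocks the naive ``closed subspace of the product'' argument and give a self-contained ultrafilter proof; your treatment is the more careful one. For part~(b), the paper uses hypothesis~(2) to build an explicit ultrametric on $\mathcal D$: it sets $d(w,w')=r(w,w')^{-1}$, where $r(w,w')$ is the least integer $n$ such that $b_{\infty,\iota}(w)\neq b_{\infty,\iota}(w')$ for some $\iota\in h^{-1}([1,n])$, and then verifies directly that balls are open in the projective-limit topology and that the first hypothesis makes every ball infinite. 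You instead extract from hypothesis~(2) only the countability of $I$ and obtain metrizability abstractly from the countable product of finite discrete spaces. Your route is cleaner and avoids checking by hand that the metric topology matches the projective-limit topology; the paper's route has the small bonus of producing a concrete metric, which it later compares with the Kuroda--Sikora metric in a remark.
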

\begin{proof} Statement a) is classical, see \cite[2-14]{H-Y} and \cite{S}. The compactness comes from Tychonoff's theorem and the definition of the projective limit topology. To prove b) we begin by showing that $\mathcal D$ can be endowed with a metric compatible with the projective limit topology. Given $w,w'\in \mathcal D$ with $w\neq w'$, define $r(w,w')$ to be the smallest integer $n$ such that $b_{\infty,\iota}(w)\neq b_{\infty,\iota}(w')$ for some $\iota\in h^{-1}([1,n])$. This is the smallest element in a non-empty set of integers since if $w\neq w'$, by definition of the projective limit there is an index $\iota_0$ such that $b_{\infty,\iota_0}(w)\neq b_{\infty,\iota_0}(w')$. Thus, our set contains $h(\iota_0)$. It follows from the definition that given three different $w,w',w"$ we have that $r(w,w")\geq{\rm min}(r(w,w'),r(w',w"))$, and that $r(w,w')=r(w',w)$. We can define a distance on $\mathcal D$ by setting $d(w,w)=0$ and $d(w,w')= r(w,w')^{-1}$ for $w'\neq w$. By the definition of the projective limit topology on $\mathcal D$ we see that it is totally discontinuous because the $D_\iota$ are and that every ball $\B(w,\eta)=\{w'/d(w',w)\leq\eta\}$ is the intersection $\bigcap_{\iota\in h^{-1}([1,\lfloor\eta^{-1}\rfloor])}b_{\infty,\iota}^{-1}(w)$ of finitely many open sets . The first assumption implies that every ball of positive radius centered in a point of $\mathcal D$ is infinite, so that $\mathcal D$ is perfect. Finally, our space $\mathcal D$ is a perfect compact and totally disconnected metric space and thus homeomorphic to the Cantor set by \cite[Corollary 2-98]{H-Y}. The fact that $\mathcal D$ is closed in $\mathcal X$ follows from the fact that each $D_\iota$ is closed in $X_\iota$.\end{proof}
\begin{remark} In \cite[Theorem 2-95]{H-Y} it is shown that a compact totally disconnected metric space is homeomorphic to the projective limit of a projective system of finite discrete spaces. The authors then show that if two such spaces are perfect, the projective systems can be chosen so that their projective limits are homeomorphic.
\end{remark}
 We recall some definitions and facts of toric geometry that are needed for our purpose, referring to \cite{E} for proofs. Let $M=\Z^r$ be the lattice of integral points in $\R^r$ and $N={\rm Hom}_\Z(M,\Z)$ its dual, a lattice in $N_\R=\check\R^r$.  We assume that $r\geq 2$.\par A (finite) fan is a finite collection $\Sigma=(\sigma_\alpha)_{\alpha\in A}$ of rational polyhedral strictly convex cones in $N_\R$ such that if $\tau$ is a face of a $\sigma_\alpha\in\Sigma$, then $\tau$ is a cone of the fan, and the intersection $\sigma_\alpha \cap\sigma_\beta$ of two cones of the fan is a face of each. A rational polyhedral convex cone is by definition the cone positively generated by finitely many vectors of the lattice $N$, called integral vectors. It is strictly convex if it does not contain any non zero vector space. Given a rational polyhedral cone $\sigma$, its convex dual $\check\sigma=\{u\in \R^r/<u,v>\geq 0 \ \forall v\in\sigma\}$, where $<u,v>=v(u)\in\R$,  is again a rational polyhedral convex cone, which is strictly convex if and only if the dimension of $\sigma$, that is, the dimension of the smallest vector subspace of $\check\R^r$ containing $\sigma$ is $r$. A refinement $\Sigma'$ of a fan $\Sigma$ is a fan such that every cone of $\Sigma'$ is contained in a cone of $\Sigma$ and the union of the cones of $\Sigma'$ is the same as that of $\Sigma$. We denote this relation by $\Sigma'\prec\Sigma$.\par
By a theorem of Gordan (see \cite[Chap. V, \S 3, Lemma 3.4]{E}), for every rational strictly convex cone in $N_\R$ the semigroup $\check\sigma\cap M$ is a finitely generated semigroup generating $M$ as a group. If we fix a field $k$ the semigroup algebra $k[t^{\check\sigma\cap M}]$ is finitely generated and corresponds to an affine algebraic variety $T_\sigma$ over $k$, which may be singular but is normal because the semigroup $\check\sigma\cap M$ is saturated in the sense that if for some $k\in\N_{>0}$ and $m\in M$ we have $km\in\check\sigma\cap M$, then $m\in\check\sigma\cap M$. To each fan $\Sigma$ is associated a normal algebraic variety $T_\Sigma$ obtained by glueing up the affine toric varieties ${\rm Spec} k[t^{\check\sigma\cap M}],\ \sigma\in\Sigma ,$ along the affine varieties corresponding to faces that are the intersections of two cones of the fan. A refinement $\Sigma'$ of a fan $\Sigma$ gives rise to a proper birational map $T_{\Sigma'}\to T_\Sigma$.\par\noindent
Each toric variety admits a natural action of the torus $T_{\{0\}}=(k^*)^r=(k-{\rm points \ of})\ \break{\rm Spec} k[t^ M]$ which has a dense orbit corresponding to the cone $\sigma=\{0\}$ of the fan. There is an inclusion reversing bijection between the cones of the fan and the orbits of the action of the torus on $T_\Sigma$. In an affine chart $T_\sigma$, the traces of the orbits of the torus action correspond to \textit{faces} $F_\tau$ of the semigroup $\check\sigma\cap M$: they are the intersections of the semigroup with the linear duals $\tau^\perp$ of the faces $\tau\subset\sigma$. The monomial ideal of $k[t^{\check\sigma\cap M}]$ generated by the monomials $t^\delta; \delta\notin F_\tau$ is prime and defines the closure of the orbit corresponding to $\tau$. When it is of maximal dimension the cone $\sigma$ itself corresponds to the zero dimensional orbit in $T_\sigma$ and all zero dimensional orbits are obtained in this way. A prime monomial ideal of $k[t^{\check\sigma\cap M}]$ defines the intersection with $T_\sigma$ of an irreducible subvariety invariant by the torus action.\par\medskip
A preorder on $M$ is a binary relation $\preceq$ with the properties that for any $m,n,o\in M$ either $m\preceq n$ or $n\preceq m$ and $m\preceq n\preceq o$ implies $m\preceq o$.\par\noindent An additive preorder on $M$ is a preorder $\preceq$ such that if $m\preceq m'$ then $m+n\preceq m'+n$ for all $n\in M$. It is a fact (see \cite{R}, \cite{Ku}, \cite{E-I}) that given any additive preorder $\preceq$ on $M$ there exist an integer $s,\ 1\leq s\leq r$ and $s$ vectors $v_1,\ldots ,v_s$ in $N_\R$ such that $$m\preceq n\ \hbox{\rm if and only if }(<m,v_1>,\ldots ,<m,v_s>)\leq_{lex}(<n,v_1>,\ldots ,<n,v_s>) ,$$ where $lex$ means the lexicographic order. An additive order is an additive preorder which is an order. This means that the vector subspace of $N_\R$ generated by $\nu_1,\ldots ,\nu_s$ is not contained in any rational hyperplane.\par\noindent
In accordance with the notations of \cite{GP-T} we denote by $w$ a typical element of $ZR(\Sigma)$ and by $m\preceq_w n$ the corresponding binary relation on $\Z^r$. Following Ewald-Ishida in \cite{E-I} we define a topology on the set of additive preorders as follows : 
\begin{definition}Let $\sigma$ be a rational polyhedral cone in $N_\R$. Define ${\mathcal U}_\sigma$ to be the set of additive preorders $w$ of $M$ such that $0\preceq_w \check\sigma\cap M$. The ${\mathcal U}_\sigma$ are a basis of open sets for a topology on the set $ZR(M)$ of additive preorders on $M$. Given a fan $\Sigma$ the union $ZR(\Sigma)=\bigcup_{\sigma\in\Sigma}{\mathcal U}_\sigma\subset ZR(M)$ endowed with the induced topology is defined by Ewald-Ishida as the Zariski-Riemann manifold of the fan $\Sigma$. By \cite[Proposition 2.10]{E-I}, it depends only on the support $\vert\Sigma\vert=\bigcup_{\sigma\in\Sigma}\sigma$ and if we assume $\vert\Sigma\vert=N_\R$, it is equal to $ZR(M)$. 
\end{definition}
This topology is the same as the topology defined in \cite{Ku} and \cite{S}, where a pre-basis of open sets is as follows: given two elements $a,b\in M$ a set of the pre-basis is the set ${\mathcal U}_{a,b}$ of preorders $w$ for which $a\preceq_w b$. Indeed, to say that a preorder $w$ is in the intersection $\bigcap_i{\mathcal U}_{a_i,b_i}$ of finitely many such sets is the same as saying that $\check\sigma\cap M\subset\{m\in M/0\preceq_w m\}$ where $\sigma\subset N_\R$ is the rational polyhedral cone dual to the cone $\check\sigma$ in $M_\R$ generated by the vectors $b_i-a_i$. If $\check\sigma=\R^r$, the intersection is the trivial preorder, where all elements are equivalent. \par
\begin{theorem}\label{EI}{{\rm (Ewald-Ishida in \cite[Theorem 2.4]{E-I})}} The space $ZR(M)$ is quasi-compact, and for any finite fan $\Sigma$ the space $ZR(\Sigma)$ is quasi-compact.
\end{theorem}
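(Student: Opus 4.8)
The plan is to reduce both statements to the single assertion that, for every rational strictly convex cone $\sigma$ in $N_\R$, the basic open set $\mathcal{U}_\sigma\subseteq ZR(M)$ is quasi-compact. This reduction is immediate: if $\Sigma$ is a complete fan then $ZR(M)=ZR(\Sigma)=\bigcup_{\sigma\in\Sigma}\mathcal{U}_\sigma$ is a finite union of such sets, the same holds for $ZR(\Sigma)$ with an arbitrary finite fan, and a finite union of quasi-compact subspaces is quasi-compact.

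To show $\mathcal{U}_\sigma$ is quasi-compact I would use Alexander's subbase lemma for the subbasis $\{\mathcal{U}_{a,b}\cap\mathcal{U}_\sigma:a,b\in M\}$. A subfamily $\{\mathcal{U}_{a_i,b_i}\cap\mathcal{U}_\sigma\}_{i\in I}$ fails to cover $\mathcal{U}_\sigma$ precisely when there is an additive preorder $w$ with $\check\sigma\cap M\subseteq\{m:0\preceq_w m\}$ and $0\prec_w c_i$ for all $i$, where $c_i:=a_i-b_i$. Writing $\preceq_w$ through lexicographic vectors $v_1,\dots,v_s$ and using Gordan's lemma to pass from $\check\sigma\cap M$ to the real cone $\check\sigma$, one sees at once the easy implication: if such a $w$ exists then there is no relation $\sum_{i\in J}\mu_i c_i+x=0$ with $J\subseteq I$ finite non-empty, all $\mu_i>0$ and $x\in\check\sigma$, since pairing it with $(v_1,\dots,v_s)$ would exhibit a lexicographically positive sum equal to $0$. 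The converse — no such relation implies such a $w$ exists — is a lexicographic form of Gordan's theorem of the alternative, which I would deduce from the following statement, proved by induction on dimension: every convex cone $D\subseteq\R^r$ carries a lexicographic functional $(v_1,\dots,v_s)$, $s\leq r$, that is lexicographically non-negative on $D$ and vanishes exactly on its lineality $D\cap(-D)$. (If $D$ is a subspace, take $v_1$ orthogonal to it; otherwise choose $v_1$ in the relative interior of the dual cone of $D$, so $v_1\geq0$ on $D$ while the zero set of $v_1$ on $D$ is the convex subcone $D\cap(-\overline D)$, which spans a proper subspace of the span of $D$, and recurse there.) Applying this to $D=\mathrm{cone}\bigl(\check\sigma\cup\{c_i\}_{i\in I}\bigr)$, whose lineality contains that of $\check\sigma$ but meets no positive combination $\sum\mu_i c_i\neq0$ (precisely because no relation as above exists), produces the required $w$. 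The Alexander argument then closes: any such relation involves only finitely many indices, so if the full subfamily covered $\mathcal{U}_\sigma$ a finite subfamily already would, contrary to hypothesis.

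The main obstacle is this lexicographic theorem of the alternative, and especially the bookkeeping between the weak inequalities carried by the finitely generated cone $\check\sigma$ and the strict inequalities imposed on the possibly infinite set $\{c_i\}$ through the lexicographic induction. A conceptually cleaner route, in the spirit of the rest of the section, sidesteps it: one identifies $\mathcal{U}_\sigma$ with the projective limit, over all subdivisions $\Sigma'$ of $\sigma$, of the finite sets of orbits of the torus action on $T_{\Sigma'}$, each topologised so that closed sets are the unions of orbit closures (equivalently, the up-sets for the face order on the cones of $\Sigma'$). The comparison map sends $w$ to the orbit of the smallest cone $\tau\in\Sigma'$ with $w\in\mathcal{U}_\tau$; it is well defined because $\{\tau\in\Sigma':w\in\mathcal{U}_\tau\}$ is non-empty ($ZR$ of a fan depending only on its support, $\mathcal{U}_\sigma=\bigcup_{\tau\in\Sigma'}\mathcal{U}_\tau$) and stable under intersection (since $\{m:0\preceq_w m\}$ is saturated, $\mathcal{U}_\tau\cap\mathcal{U}_{\tau'}=\mathcal{U}_{\tau\cap\tau'}$), and it is continuous and surjective, with the preimage of the basic open set of orbits of the faces of a cone $\tau_0$ equal to $\mathcal{U}_{\tau_0}$. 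These maps form a compatible system of continuous surjections between finite spaces, so Proposition~\ref{or}~a) gives the quasi-compactness of the projective limit, hence of $\mathcal{U}_\sigma$. In this approach the work lies in showing the comparison map is a homeomorphism: injectivity is just that $ZR(M)$ is $T_0$ (distinct preorders are separated by some $\mathcal{U}_{0,c}$), while surjectivity — every compatible family of orbits comes from an additive preorder — is the input borrowed from \cite{GP-T}.
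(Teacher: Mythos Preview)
The paper does not give its own proof of Theorem~\ref{EI}; the result is simply quoted from Ewald--Ishida with a reference. The only argument for quasi-compactness that the paper itself supplies appears later, in the proof of the final Proposition, where quasi-compactness of $ZR(M)$ is obtained by combining Proposition~\ref{or}(a) (a projective limit of finite spaces is quasi-compact) with the homeomorphism $Z\colon ZR(\Sigma)\to\limproj_{\Sigma'\prec\Sigma}\{T_{\Sigma'}\}$ of Theorem~\ref{Zar} and Corollary~\ref{OZar}. This is precisely your second, ``cleaner'' route, so on that approach you and the paper coincide. One caveat worth recording: in this route the real work is imported from \cite{GP-T}, and you should check that the proof of the homeomorphism there does not itself rely on the quasi-compactness you are trying to establish.

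Your first route, via Alexander's subbase lemma and a lexicographic theorem of the alternative, is a genuinely different and more self-contained argument: it gives quasi-compactness of each $\mathcal U_\sigma$ directly, without invoking the projective-limit description at all. The overall shape is correct, including the reduction to $\mathcal U_\sigma$, the reformulation of ``no finite subcover'' as the absence of a relation $\sum_{i\in J}\mu_i c_i+x=0$, and the observation that such a relation involves only finitely many indices. There is, however, a small gap in your inductive sketch of the key lemma (``every convex cone $D$ carries a lexicographic functional non-negative on $D$ and vanishing exactly on $D\cap(-D)$''): your dichotomy ``$D$ a subspace / otherwise pick $v_1\in\mathrm{relint}(D^*)$'' misses the case where $D$ is not a subspace but $\overline D$ is, e.g.\ $D=\{(x,y):y>0\}\cup\{(x,0):x\geq 0\}$ in $\R^2$, for which $D^*=\{0\}$ and no such $v_1$ exists. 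The lemma is true nonetheless; one fix is to run your step on $\overline D$ first (which \emph{is} either a subspace or has nontrivial dual) and then recurse inside the strictly smaller cone $D\cap\bigl(\overline D\cap(-\overline D)\bigr)$.
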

It is shown in \cite[Proposition 2.6]{E-I} that to any fan $\Sigma$ and any preorder $w\in ZR(\Sigma)$ we can associate a cone $\sigma\in\Sigma$. It is the unique cone with the properties $0\preceq_w \check\sigma\cap M$ and $\sigma^\perp\cap M=\{m\in\check\sigma\cap M/m\preceq_w 0\ {\rm and}\ m\succeq_w 0\}$. Following Ewald-Ishida, we say that $w$ dominates $\sigma$. If $w$ is an order, $\sigma$ is of maximal dimension $r$.\par
It is not difficult to verify that if $\Sigma'$ is a refinement of $\Sigma$ we have $ZR(\Sigma')=ZR(\Sigma)$ and moreover, given $w\in ZR(\Sigma)$ the corresponding cones $\sigma',\sigma$ verify $\sigma'\subset\sigma$, so that we have a torus-equivariant map $T_{\sigma'}\to T_\sigma$ of the corresponding toric affine varieties. \par
Given a fan $\Sigma$ its finite refinements, typically denoted by $\Sigma'$, form a directed partially ordered set. It is partially ordered by the refinement relation $\Sigma'\prec\Sigma$. It is a directed set because any two finite fans with the same support have a common finite refinement (see \cite[Chap. III]{KKMS} or \cite[Chap. VI]{E}). We are going to study three projective systems of sets indexed by it. The set $\{T_{\Sigma'}\}$ of torus-invariant irreducible subvarieties of $T_{\Sigma'}$, with the topology induced by the Zariski topology, the set $O_{\Sigma'}$ of the torus orbits of $T_{\Sigma'}$, again endowed with the Zariski topology, and finally the set of $0$-dimensional orbits, which is the set of closed points of $O_{\Sigma'}$. The first two sets are in fact equal because a torus invariant irreducible subvariety of $T_\Sigma$ is the closure of an orbit. This is the meaning of \cite[Lemma 3.3]{GP-T}: a prime monomial ideal of $k[t^{\check\sigma\cap M}]$ is generated by the monomials that are not in a \textit {face} of the semigroup, and the Lemma states that faces are the $\tau^\perp\cap \check\sigma\cap M$, where $\tau$ is a face of $\sigma$ and so corresponds to an orbit.\par The sets $\{T_{\Sigma'}\}=O_{\Sigma'}$ are finite since our collection of cones in each $\Sigma'$ is finite. The topology induced by the Zariski topology of $T_{\Sigma'}$ on $\{T_{\Sigma'}\}$ is such that the closure of an element of $\{T_{\Sigma'}\}$ is the set of orbits contained in the closure of the corresponding orbit of the torus action. \par\noindent Thus, the closed points of $\{T_{\Sigma'}\}$ are the zero dimensional orbits of the torus action on $T_{\Sigma'}$ and are in bijective correspondence with the cones of maximal dimension of $\Sigma'$.\par\noindent
Given a refinement $\Sigma'\prec\Sigma$, the corresponding proper birational equivariant map $T_{\Sigma'}\to T_\Sigma$ maps surjectively $\{T_{\Sigma'}\}$ to $\{T_{\Sigma}\}$ and zero dimensional orbits to zero dimensional orbits. The map induced on zero dimensional orbits is surjective because every orbit contains zero dimensional orbits in its closure.\par
Given an additive preorder $w\in ZR(\Sigma)$ it dominates a unique cone $\sigma'$ in each refinement $\Sigma'$ of $\Sigma$ and defines a unique torus-invariant irreducible subvariety of $T_{\Sigma'}$ corresponding to the prime ideal of $k[t^{\check\sigma'\cap M}]$ generated by the monomials whose exponents are $\succeq_w 0$. This defines a map

$$Z\ \colon \ ZR(\Sigma)\longrightarrow\limproj_{\Sigma'\prec\Sigma} \{T_{\Sigma'}\}.$$
We now quote two results from \cite{GP-T}:
\begin{theorem}\label{Zar}{{\rm (Gonz\'alez P\'erez-Teissier in \cite[Proposition 14.8]{GP-T})}}\par\noindent For any finite fan $\Sigma$, the map $Z$ is a homeomorphism between $ZR(\Sigma)$ with its Ewald-Ishida topology and $\limproj_{\Sigma'\prec\Sigma} \{T_{\Sigma'}\}$ with the projective limit of the topologies induced by the Zariski topology.\end{theorem}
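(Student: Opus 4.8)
The plan is to produce an explicit inverse of $Z$ and to verify that both $Z$ and this inverse are continuous; since the two spaces are only $T_0$, not Hausdorff, a soft compactness argument is not available. First one checks that $Z$ is well defined: for $w\in ZR(\Sigma)$ write $\sigma_{\Sigma'}$ for the cone of $\Sigma'$ dominated by $w$ and $O_{\sigma_{\Sigma'}}$ for the corresponding orbit, so that $Z(w)=(\overline{O_{\sigma_{\Sigma'}}})_{\Sigma'\prec\Sigma}$; this family is compatible. Indeed, for $\Sigma''\prec\Sigma'$ one has $\sigma_{\Sigma''}\subseteq\sigma_{\Sigma'}$, and in fact $\sigma_{\Sigma'}$ is the unique cone of $\Sigma'$ whose relative interior contains that of $\sigma_{\Sigma''}$; hence the equivariant birational morphism $T_{\Sigma''}\to T_{\Sigma'}$ carries $\overline{O_{\sigma_{\Sigma''}}}$ onto $\overline{O_{\sigma_{\Sigma'}}}$ (equivalently, the prime monomial ideal of $k[t^{\check\sigma_{\Sigma'}\cap M}]$ singled out by $w$ is the contraction of the corresponding ideal of $k[t^{\check\sigma_{\Sigma''}\cap M}]$).

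For continuity of $Z$, fix a refinement $\Sigma'$ and a cone $\tau\in\Sigma'$, and let $W_\tau$ be the minimal open subset of the finite space $\{T_{\Sigma'}\}$ containing $\overline{O_\tau}$, namely $W_\tau=\{\overline{O_{\tau'}}/\tau'\ \hbox{\rm a face of}\ \tau\}$; these sets generate the topology of $\{T_{\Sigma'}\}$. Then $Z^{-1}\bigl(b_{\infty,\Sigma'}^{-1}(W_\tau)\bigr)={\mathcal U}_\tau$: if the dominated cone $\sigma_{\Sigma'}$ is a face of $\tau$, then $\check\tau\subseteq\check\sigma_{\Sigma'}$, so $0\preceq_w\check\tau\cap M$ and $w\in{\mathcal U}_\tau$; conversely ${\mathcal U}_\tau=ZR(\{\hbox{\rm faces of}\ \tau\})$ by \cite[Proposition 2.10]{E-I}, so by \cite[Proposition 2.6]{E-I} every $w\in{\mathcal U}_\tau$ dominates a face of $\tau$, which by uniqueness of the dominated cone is $\sigma_{\Sigma'}$. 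Thus preimages of subbasic open sets are open, and $Z$ is continuous.

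Next I construct the inverse $\Phi$. Given a compatible family $\mathbf x=(\tau_{\Sigma'})_{\Sigma'\prec\Sigma}$, set $P_{\mathbf x}=\bigcup_{\Sigma'}\bigl(\check\tau_{\Sigma'}\cap M\bigr)\subseteq M$ and let $\Phi(\mathbf x)$ be the relation $m\preceq n\iff n-m\in P_{\mathbf x}$. Compatibility forces $\tau_{\Sigma^{*}}\subseteq\tau_{\Sigma'}$ whenever $\Sigma^{*}\prec\Sigma'$, hence $\check\tau_{\Sigma^{*}}\supseteq\check\tau_{\Sigma'}$; passing to common refinements then shows $P_{\mathbf x}$ is a submonoid of $M$ containing $0$, while refining $\Sigma$ along a rational hyperplane $m^{\perp}$ shows $P_{\mathbf x}\cup(-P_{\mathbf x})=M$. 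So $\Phi(\mathbf x)$ is an additive preorder, and it lies in $ZR(\Sigma)$ since $\check\tau_{\Sigma}\cap M\subseteq P_{\mathbf x}$. To see $\Phi\circ Z=\mathrm{id}$, observe that $\Phi(Z(w))=w$ because both are determined by their positive cones, and the positive cone $\bigcup_{\Sigma'}(\check\sigma_{\Sigma'}\cap M)$ of $\Phi(Z(w))$ equals $\{m\in M/0\preceq_w m\}$: the inclusion $\subseteq$ holds because $w$ dominates each $\sigma_{\Sigma'}$, and the reverse inclusion follows by refining $\Sigma$ along $m^{\perp}$, for then $m$ or $-m$ lies in $\check\sigma_{\Sigma'}$, and in the latter case $m\equiv_w 0$, so $m\in\sigma_{\Sigma'}^{\perp}\subseteq\check\sigma_{\Sigma'}$ anyway.

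The remaining identity $Z\circ\Phi=\mathrm{id}$ is the step I expect to be the main obstacle: with $w=\Phi(\mathbf x)$, one must show the cone $\sigma_{\Sigma'}$ dominated by $w$ in $\Sigma'$ is $\tau_{\Sigma'}$. Since $\check\tau_{\Sigma'}\cap M\subseteq P_{\mathbf x}$ we have $w\in{\mathcal U}_{\tau_{\Sigma'}}$, so $\sigma_{\Sigma'}$ is a face of $\tau_{\Sigma'}$ and $0\preceq_w\check\tau_{\Sigma'}\cap M$; by the characterization of the dominated cone in \cite[Proposition 2.6]{E-I} it suffices to show that any $m\in\check\tau_{\Sigma'}\cap M$ with $m\equiv_w 0$ lies in $\tau_{\Sigma'}^{\perp}$. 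From $m\equiv_w 0$ we get $-m\in\check\tau_{\Sigma''}$ for some $\Sigma''$, so over a common refinement $\Sigma^{*}$ of $\Sigma'$ and $\Sigma''$ one has $\pm m\in\check\tau_{\Sigma^{*}}$, i.e.\ $\langle m,\cdot\rangle$ vanishes on $\tau_{\Sigma^{*}}$; since $\tau_{\Sigma^{*}}\subseteq\tau_{\Sigma'}$ meets the relative interior of $\tau_{\Sigma'}$ while $\langle m,\cdot\rangle\ge 0$ on $\tau_{\Sigma'}$, a linear form nonnegative on a cone and vanishing at a relative-interior point vanishes on the whole cone, so $m\in\tau_{\Sigma'}^{\perp}$ (the degenerate case $\tau_{\Sigma^{*}}=\{0\}$ forces $\tau_{\Sigma'}=\{0\}$ and is immediate). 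This makes $Z$ a bijection. Finally, since $Z$ is a continuous bijection and $Z^{-1}(b_{\infty,\Sigma'}^{-1}(W_\tau))={\mathcal U}_\tau$, surjectivity gives $Z({\mathcal U}_\tau)=b_{\infty,\Sigma'}^{-1}(W_\tau)$, which is open; as the sets ${\mathcal U}_\tau$ with $\tau$ a cone of some refinement of $\Sigma$ form a basis of the topology of $ZR(\Sigma)$, $Z$ is an open map, hence a homeomorphism.
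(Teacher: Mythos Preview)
The paper does not prove this theorem; it explicitly quotes it from \cite[Proposition 14.8]{GP-T} (``We now quote two results from \cite{GP-T}''), so there is no in-paper proof to compare against. Your proposal is a self-contained and essentially correct reconstruction of such a proof: you identify the minimal open sets $W_\tau$ of the finite orbit spaces, show $Z^{-1}\bigl(b_{\infty,\Sigma'}^{-1}(W_\tau)\bigr)={\mathcal U}_\tau$, build the inverse $\Phi$ via the positive cone $P_{\mathbf x}=\bigcup_{\Sigma'}(\check\tau_{\Sigma'}\cap M)$, and check both compositions are the identity. The delicate step $Z\circ\Phi=\mathrm{id}$ is handled correctly: compatibility of the projective system means the relative interior of $\tau_{\Sigma^*}$ lies in that of $\tau_{\Sigma'}$, and your ``linear form nonnegative on a cone and vanishing at a relative-interior point'' argument is exactly what is needed.

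Two small points you assert without proof but which are true and easy: (i) that the ${\mathcal U}_\tau$ with $\tau$ ranging over cones of refinements of $\Sigma$ form a basis of $ZR(\Sigma)$ --- this follows by refining $\Sigma$ along the hyperplanes $m_i^\perp$ for a finite generating set $\{m_i\}$ of $\check\rho\cap M$ and using the domination condition to see $m_i\in\check\tau$ for the dominated cone $\tau$; (ii) that the $W_\tau$ are precisely the minimal open neighborhoods in the finite $T_0$ space $\{T_{\Sigma'}\}$, which is a standard description of Alexandrov topologies. With these two remarks made explicit, your argument is complete and is presumably close in spirit to the original proof in \cite{GP-T}.
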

\begin{corollary}\label{OZar}{{\rm (Gonz\'alez P\'erez-Teissier in \cite[Section 13]{GP-T})}} If $\vert\Sigma\vert=N_\R$ the map $Z$ is a homeomorphism between the space $ZR(M)$ of additive preorders on $\Z^r$ and $\limproj_{\Sigma'\prec\Sigma} \{T_{\Sigma'}\}$ and induces a homeomorphism between the space of additive orders on $M$ and the projective limit of the discrete sets $\{T_{\Sigma'}\}_0$ of $0$-dimensional orbits.
\end{corollary}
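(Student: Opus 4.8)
The first homeomorphism needs nothing new: since $\vert\Sigma\vert=N_\R$ one has $ZR(\Sigma)=ZR(M)$, so the map $Z$ of Theorem \ref{Zar} already is a homeomorphism $ZR(M)\to\limproj_{\Sigma'\prec\Sigma}\{T_{\Sigma'}\}$. The plan for the statement on orders is to show that $Z$ carries the subset of additive orders \emph{onto} $\limproj_{\Sigma'\prec\Sigma}\{T_{\Sigma'}\}_0$; since $Z$ is a homeomorphism it will then restrict to a homeomorphism of the space of additive orders onto that subspace. One also records the routine topological facts that $\{T_{\Sigma'}\}_0$, being a finite set of closed points of $\{T_{\Sigma'}\}$, carries the discrete induced topology; that the induced transition maps $\{T_{\Sigma''}\}_0\to\{T_{\Sigma'}\}_0$ are surjective, since every orbit has a $0$-dimensional orbit in its closure; and that forming a subspace commutes with forming a projective limit, so that $\limproj_{\Sigma'\prec\Sigma}\{T_{\Sigma'}\}_0$, as a subspace of $\limproj_{\Sigma'\prec\Sigma}\{T_{\Sigma'}\}$, carries exactly the projective limit topology of the discrete spaces $\{T_{\Sigma'}\}_0$.

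Recall that $Z$ sends $w$ to the family $(\sigma')_{\Sigma'\prec\Sigma}$ of cones dominated by $w$, and that the closed points of $\{T_{\Sigma'}\}$ are the $0$-dimensional orbits, which correspond to the $r$-dimensional cones of $\Sigma'$. Hence everything reduces to the equivalence
$$w\ \hbox{\rm is an additive order}\iff\dim\sigma'=r\ \hbox{\rm for every refinement}\ \Sigma'\prec\Sigma ,$$
where $\sigma'$ denotes the cone dominated by $w$ in $\Sigma'$. The implication $\Rightarrow$ is just the fact recalled above that an additive order dominates a maximal-dimensional cone of every fan containing it, applied to each refinement $\Sigma'$ (note $w\in ZR(\Sigma')=ZR(M)$).

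The converse, which I expect to be the real work, I would prove by contraposition: assuming $w$ is not an order, I produce one refinement in which the cone dominated by $w$ has dimension $<r$. Since $w$ is not an order, it fails to be antisymmetric, and by additivity this yields a nonzero $m_0\in M$ with $m_0\preceq_w 0$ and $m_0\succeq_w 0$. Let $H=m_0^\perp\subset N_\R$, a rational hyperplane, and choose a finite refinement $\Sigma'\prec\Sigma$ in which $H$ is a union of cones; such a $\Sigma'$ exists, for instance obtained by intersecting every cone of $\Sigma$ with $H$ and with the two closed half-spaces that $H$ bounds. The cone $\sigma'$ dominated by $w$ in $\Sigma'$ is a single cone of $\Sigma'$, and since no cone of a fan can straddle a hyperplane that is a union of cones of the fan, $\sigma'$ lies in one of the two closed half-spaces bounded by $H$; replacing $m_0$ by $-m_0$ if necessary we may assume $\langle m_0,n\rangle\geq 0$ for all $n\in\sigma'$, i.e. $m_0\in\check\sigma'$. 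Since $m_0\in M$ also satisfies $m_0\preceq_w 0$ and $m_0\succeq_w 0$, the characterization $(\sigma')^\perp\cap M=\{m\in\check\sigma'\cap M\,/\,m\preceq_w 0\ \hbox{\rm and}\ m\succeq_w 0\}$ of the dominated cone forces $m_0\in(\sigma')^\perp$, whence $\sigma'\subset m_0^\perp=H$ and $\dim\sigma'\leq r-1$. Thus $Z(w)$ has a component in $\{T_{\Sigma'}\}$ which is not a closed point, i.e. $Z(w)\notin\limproj_{\Sigma''\prec\Sigma}\{T_{\Sigma''}\}_0$. This establishes the equivalence; combined with the bijectivity of $Z$ it gives $Z(\{\hbox{\rm additive orders}\})=\limproj_{\Sigma'\prec\Sigma}\{T_{\Sigma'}\}_0$, and the corollary follows.
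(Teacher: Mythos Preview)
The paper does not give a proof of this corollary; it simply states it as a result from \cite[Section 13]{GP-T}, immediately after Theorem \ref{Zar}. So there is no in-paper argument to compare against beyond the surrounding remarks: the paper records that $ZR(\Sigma)=ZR(M)$ when $\vert\Sigma\vert=N_\R$, and that an additive order always dominates a cone of maximal dimension. These are exactly the facts you invoke for the first assertion and for the forward implication of your equivalence.

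Your proof is correct and supplies precisely the content the paper defers to \cite{GP-T}. The substantive step is the converse implication, and your hyperplane-subdivision argument handles it cleanly: given a nonzero $m_0$ in the $\preceq_w$-equivalence class of $0$, the refinement of $\Sigma$ by the rational hyperplane $m_0^\perp$ is a standard fan subdivision, and the characterization of the dominated cone via $(\sigma')^\perp\cap M$ then forces $\sigma'\subset m_0^\perp$. The topological bookkeeping (discreteness of $\{T_{\Sigma'}\}_0$, surjectivity of the transition maps on closed points, compatibility of subspaces with projective limits) is routine and you state it accurately. One could shorten the argument slightly by observing, via the description of preorders by vectors $v_1,\ldots,v_s\in N_\R$ recalled in the paper, that a non-order has its $v_i$ lying in a rational hyperplane $m_0^\perp$, and hence already in $\Sigma$ the dominated cone $\sigma$ satisfies $m_0\in\sigma^\perp\cap M$ if one first refines so that $\sigma\subset m_0^\perp$ or $\sigma$ lies on one side of it; but this is essentially what you do.
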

\begin{definition}\label{height} The height $h(\Sigma)$ of a finite fan $\Sigma$ in $N_\R$ is the maximum absolute value of the coordinates of the primitive vectors in $N$ generating the one-dimensional cones of $\Sigma$. There are only finitely many fans of height bounded by a given integer.
\end{definition}
We note that the fan consisting of the $2^r$ quadrants of $N_\R$ and their faces has height one.\par
Let us show that we can apply Proposition \ref{or} to obtain the conjunction of Sikora's result in \cite[Proposition 1.7]{S} and Ewald-Ishida's in \cite[Proposition 2.3]{E-I}:
\begin{proposition}For $r\geq 2$ the space of additive preorders on $Z^r$ is quasi-compact. It contains as a closed subset the space of additive orders, which is homeomorphic to the Cantor set.
\end{proposition}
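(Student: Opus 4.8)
The plan is to deduce the statement from Proposition \ref{or}, applied to the projective system formed by the orbit spaces of the toric varieties attached to the refinements of a fixed fan. First I would fix once and for all a finite fan $\Sigma$ with $\vert\Sigma\vert=N_\R$ --- the fan of the $2^r$ quadrants of $N_\R$ and their faces will do --- so that $ZR(\Sigma)=ZR(M)$ and Corollary \ref{OZar} applies. For the directed set $I$ of Proposition \ref{or} I take the set of finite refinements of $\Sigma$ ordered by refinement; for the finite spaces $X_\iota$, the sets $\{T_{\Sigma'}\}$ of torus-invariant irreducible subvarieties with the topology induced by the Zariski topology; for the transition maps, the surjective continuous maps induced on orbit spaces by the proper birational equivariant morphisms $T_{\Sigma''}\to T_{\Sigma'}$; and for the subsets $D_\iota$, the sets $\{T_{\Sigma'}\}_0$ of zero-dimensional orbits, which are the closed points and correspond to the maximal cones of $\Sigma'$. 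Part a) of Proposition \ref{or} then gives that $\limproj_{\Sigma'\prec\Sigma}\{T_{\Sigma'}\}$ is quasi-compact, and by Corollary \ref{OZar} this space is homeomorphic to $ZR(M)$; this already yields the first assertion and reproves Theorem \ref{EI}.

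It then remains to verify the two hypotheses of part b) of Proposition \ref{or} for the subsystem of zero-dimensional orbits. For hypothesis (2) I would take $h$ to be the height function of Definition \ref{height}: refining a fan cannot decrease its height, since the primitive generators of the one-dimensional cones of $\Sigma'$ occur again among those of any refinement $\Sigma''$ (a one-dimensional cone of $\Sigma'$ is, being one-dimensional, still a cone of the finer fan); and $h\geq 1$ while $h^{-1}([1,m])$ is contained in the finite set of all fans of height at most $m$.

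For hypothesis (1), the surjectivity of the maps $\{T_{\Sigma''}\}_0\to\{T_{\Sigma'}\}_0$ has already been recalled, since every orbit contains zero-dimensional orbits in its closure. The substantive step is to prove that in $\mathcal D=\limproj_{\Sigma'\prec\Sigma}\{T_{\Sigma'}\}_0$ the fibre of $b_{\infty,\Sigma'}$ over any zero-dimensional orbit of $T_{\Sigma'}$ is infinite. Here I would use Corollary \ref{OZar} to identify $\mathcal D$ with the space of additive orders on $M$; the fibre in question then becomes exactly the set of additive orders dominating the maximal cone $\sigma'$ of $\Sigma'$ attached to the orbit. Since $\vert\Sigma'\vert=N_\R$, the cone $\sigma'$ is $r$-dimensional, so its interior is a non-empty open cone of $N_\R$, and because $r\geq 2$ this interior contains infinitely many pairwise non-proportional vectors $v$ whose coordinates are linearly independent over $\Q$. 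Each such $v$ defines an additive order $w_v$ on $M$ by declaring $m\preceq_{w_v}n$ when $\langle m,v\rangle\leq\langle n,v\rangle$; it is genuinely an order, not only a preorder, because $\langle m,v\rangle=0$ forces $m=0$. Since $v$ lies in the interior of $\sigma'$ one has $\langle m,v\rangle>0$ for every non-zero $m\in\check\sigma'$, so $0\preceq_{w_v}\check\sigma'\cap M$ and $0$ is the only element of $\check\sigma'\cap M$ that is equivalent to $0$ for $w_v$; these are precisely the two conditions saying that $w_v$ dominates $\sigma'$. Finally, non-proportional $v$ and $v'$ yield distinct orders, because one can choose $m\in M$ with $\langle m,v\rangle$ and $\langle m,v'\rangle$ of opposite signs. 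Hence the fibre is infinite.

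With both hypotheses in place, part b) of Proposition \ref{or} shows that $\mathcal D$ is closed in $\limproj_{\Sigma'\prec\Sigma}\{T_{\Sigma'}\}$ and homeomorphic to the Cantor set; transporting this through the homeomorphism $Z$ of Corollary \ref{OZar} gives that the space of additive orders on $\Z^r$ is closed in $ZR(M)$ and homeomorphic to the Cantor set, which is the assertion. I expect the main obstacle to be hypothesis (1): translating the fibre over a zero-dimensional orbit into a set of additive orders and then exhibiting infinitely many orders dominating a fixed maximal cone. This is where the hypothesis $r\geq 2$ is really used --- for $r=1$ one obtains only the two orders of $\Z$ and the conclusion fails. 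Checking the monotonicity of the height function for hypothesis (2) is elementary but is the other point that must be pinned down carefully.
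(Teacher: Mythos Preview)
Your proof is correct and follows the paper's overall strategy: apply Proposition~\ref{or} to the projective system of orbit spaces $\{T_{\Sigma'}\}$ indexed by refinements, with the height function of Definition~\ref{height} supplying hypothesis~(2). The one genuine difference is in the verification of hypothesis~(1). The paper argues combinatorially: since $r\geq 2$, any maximal cone $\sigma'$ admits infinitely many fan refinements (invoking the regular subdivision theorem, which for any integral ray in the interior of $\sigma'$ produces a regular refinement containing that ray), and these yield infinitely many closed points in the tower over the orbit of $\sigma'$. You instead pass through Corollary~\ref{OZar} to identify the fibre with the set of additive orders dominating $\sigma'$, and exhibit infinitely many such orders directly by choosing pairwise non-proportional interior vectors $v$ with $\Q$-linearly independent coordinates. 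Your route is more elementary---it avoids the appeal to regular subdivisions---and makes the role of $r\geq 2$ completely transparent; the paper's route stays closer to the toric picture and shows how the infinitude of the fibre is already visible at the level of fans. Both are valid, and the rest of the argument (the height function, the closedness, the transport via $Z$) is identical.
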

\begin{proof}To apply Proposition \ref{or}, our partially ordered directed set is the set of finite fans in $N_\R$ with support $N_\R$. The set of additive preorders is quasi-compact by Proposition \ref{or}. Concerning the set of orders, we apply the second part of the proposition. Our discrete sets are the sets $\{T_{\Sigma'}\}_0$ of zero dimensional torus orbits of the toric varieties $T_{\Sigma'}$. Each refinement of a cone of maximal dimension contains cones of maximal dimension and since $r\geq 2$ each cone $\sigma'\in\Sigma'$ can be refined into infinitely many fans with support $\sigma'$ which produce as many refinements of $\Sigma'$. To see this, one may use the fact that each cone of maximal dimension, given any integral vector in its interior, can be refined into a fan whose cones are all regular and which contains the cone generated by the given integral vector (see \cite[Chap. III]{KKMS} or \cite[Chap. VI]{E}). Our function $h(\Sigma')$ is the height of definition \ref{height} above, which has the required properties since $h(\Sigma')\geq h(\Sigma)$ if $\Sigma'\prec\Sigma$ because the one dimensional cones of $\Sigma$ are among those of $\Sigma'$.
\end{proof}
\begin{remark}There  are of course many metrics compatible with the topology of the space of orders induced by that of $ZR(M)$ and another one is provided following \cite[\S 1]{Ku} and \cite[Definition 1.2]{S}: Let $\B(0,D)$ be the ball centered at $0$ and with radius $D$ in $\R^r$. Define the distance $\tilde d(w,w')$ to be $0$ if $w=w'$ and otherwise $\frac{1}{D}$, where $D$ is the largest integer such that $w$ and $w'$ induce the same order on $\Z^r\cap\B(0,D)$. It would be interesting to verify directly, perhaps using Siegel's Lemma (see \cite{Sc}), that the distances $d(w,w')$ and $\tilde d(w,w')$ define the same topology.
\end{remark}
\begin{remark}The homeomorphism $Z$ of Theorem \ref{Zar} is the toric avatar of Zariski's homeomorphism between the space of valuations of a field of algebraic functions and the projective limit of the proper birational models of this field. As explained in \cite[\S 13]{GP-T} the space of orders is the analogue for the theory of preorders of zero dimensional valuations in the theory of valuations. 
\end{remark}

\begin{acknowledgement}
I am grateful to Hussein Mourtada for interesting discussions of the first topic of this note and for calling my attention to the phenomenon described in the case $p=2$.
\end{acknowledgement}

\end{document}